\title{Solvability by power of sum of element order}
\author{hiranya.dey }
\pgfplotsset{compat=1.12,axis lines=center}
 \def\ps@pprintTitle{%
   \let\@oddhead\@empty
  \let\@evenhead\@empty
  \let\@oddfoot\@empty
  \let\@evenfoot\@oddfoot
 }
\newtheorem*{theoremaux}{Theorem \theoremauxnum}
\gdef\theoremauxnum{1}
\newtheorem{lemma}{\bf Lemma}[section]
\newtheorem{theorem}{\bf Theorem}[section]
\newtheorem{remark}{\bf Remark}[section]
\journal{~}
\begin{document}

\begin{frontmatter}



\title{The solvability of a finite group by the sum of powers of element orders}



\author{Hiranya Kishore Dey}
\ead{hiranya.dey@gmail.com} 
\address{Department of Mathematics,\\ 
Indian Institute of Science, Bangalore \\ India}

%


\begin{abstract}

\noindent 
We prove a new criterion for the solvability of the finite groups, depending on the function $\psi_k(G)$ which is defined as the sum of $k$-th powers of the element orders of $G$. We show that our result can be used to show the solvability of some groups for which the solvability does not follow from earlier similar kind of results and we emphasize the following: looking at $\psi_k(G)$ for $k>1$ can be useful to get further pieces of information about the group $G$.

\medskip

\noindent

\end{abstract}

\begin{keyword}
  sum of element orders \sep  sum of powers of element orders \sep solvable groups \sep direct product and semi-direct product of groups
  
  \medskip 
  
\MSC[2020] 20D60 \sep 20E34 \sep 20F16 

\end{keyword}

\end{frontmatter}

\section{Introduction}
\label{sec:intro}

Let $G$ be a finite group. H. Amiri, S. M. Jafarian Amiri and I. M. Isaacs  \cite{amiri-communication} defined the following function:
$$\psi(G)= \sum_{g \in G} o(g),$$ 
where $o(g)$ denotes the order of the element $g$.  They proved that
for any finite group $G$ of order $n$, $\psi(G) \leq \psi(\mathbb{Z}_n)$ and equality holds if and only if $G \simeq \mathbb{Z}_n,$ where $\mathbb{Z}_n$ denotes the cyclic group of order $n$.
This function $\psi$ has been considered in various works since then (see \cite{amiri-algapplctn, 
amiri-communication-secondmax, amiri-pureandApplied,
asad-joa, chew-chin-lim, Her-joa, Her-cia, Her-jpaa, shen-et-al, tarnauceanu-israel}) and the purpose of many of those papers was to prove new criteria for structural properties like solvability, nilpotency, etc. of finite groups.

M. Herzog, P. Longobardi, and M. Maj  \cite{Her-joa} proved the following criterion for the solvability of a finite group. 

\begin{theorem}[Herzog-Longobardi-Maj]
\label{thm:solvable_bysumofel_herzog18JOA}
Let $G$ be a finite group of order $n$ and suppose that $$\psi(G) > \frac{1}{6.68}\psi(\mathbb{Z}_n).$$ Then $G$ is solvable. 
\end{theorem}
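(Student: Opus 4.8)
The statement is equivalent to: every non-solvable finite group $G$ of order $n$ satisfies $\psi(G)\le\frac{1}{6.68}\psi(\mathbb{Z}_n)$, and I would prove this form by induction on $n$. So suppose $G$ is a non-solvable group of smallest order for which it fails, and pick a minimal normal subgroup $N$ of $G$. The backbone of the argument is an \emph{extension estimate}: a bound of the shape $\psi(G)\le\lambda(N)\,\psi(G/N)$ together with a companion arithmetic inequality for the cyclic model group guaranteeing $\lambda(N)\,\psi(\mathbb{Z}_{n/|N|})\le\psi(\mathbb{Z}_n)$. For the arithmetic side I would first record that $\psi(\mathbb{Z}_{ab})=\psi(\mathbb{Z}_a)\psi(\mathbb{Z}_b)$ when $\gcd(a,b)=1$ and, more to the point, $\psi(\mathbb{Z}_{ab})\ge\psi(\mathbb{Z}_a)\psi(\mathbb{Z}_b)$ for all $a,b$ (so shrinking $n$ only helps the right-hand side), and then compute in closed form the quantity $p^2\psi(\mathbb{Z}_m)-\psi(\mathbb{Z}_{pm})$, which governs the delicate case $N\cong\mathbb{Z}_p$. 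For the group side I would use that every $g$ in a coset $gN$ satisfies $o(g)\mid o(gN)\exp(N)$, refined by counting how many elements of a coset can fail to attain the minimal possible order and by treating the identity coset (where $\psi(N)$ is genuinely smaller than $|N|\exp(N)$) separately. Since the class of solvable groups is extension-closed, whenever $N$ is solvable the quotient $G/N$ is still non-solvable and of smaller order, so the inductive hypothesis applies and this case closes.

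This reduces us to the case that $G$ has a \emph{unique} minimal normal subgroup $N$, which is non-abelian, say $N\cong S^k$ for a non-abelian simple group $S$ and some $k\ge1$; here $C_G(N)=1$, so $\mathrm{Inn}(N)\le G\le\mathrm{Aut}(S)\wr\mathrm{Sym}(k)$. At this point I invoke Thompson's classification of the minimal simple groups: passing to a minimal non-solvable subgroup of $S$ and then modding out a maximal normal subgroup of it produces a section of $G$ isomorphic to one of $\mathrm{PSL}(2,2^p)$ ($p$ prime), $\mathrm{PSL}(2,3^p)$ ($p$ odd prime), $\mathrm{PSL}(2,p)$ ($p>3$ prime with $5\nmid p^2-1$), $Sz(2^p)$ ($p$ odd prime), or $\mathrm{PSL}(3,3)$. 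The main reduction lemma then bounds $\psi(G)$ from above in terms of $\psi$ of this section and of the wreath layers above it, at the cost of factors of $\psi(\mathbb{Z}_{\ast})$ that the arithmetic lemmas absorb, so that it suffices to verify $\psi(T)\le\frac{1}{6.68}\psi(\mathbb{Z}_{|T|})$ for $T$ ranging over this finite list of families.

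The last part is computation. For the small cases one reads the element-order spectrum off the character tables: $\psi(A_5)=1+15\cdot2+20\cdot3+24\cdot5=211$ against $\psi(\mathbb{Z}_{60})=1617$, so the ratio is $211/1617\approx0.130$, comfortably below $1/6.68\approx0.150$, and this turns out to be the extremal case; $\mathrm{PSL}(2,7)$, $\mathrm{PSL}(2,8)$, and $\mathrm{PSL}(3,3)$ are handled the same way and give smaller ratios. For the infinite families one needs uniform bounds: since $|\mathrm{PSL}(2,q)|=\frac{1}{\gcd(2,q-1)}q(q^2-1)$ while every element order divides $q$, $\frac{q-1}{\gcd(2,q-1)}$ or $\frac{q+1}{\gcd(2,q-1)}$, a crude count gives $\psi(\mathrm{PSL}(2,q))=O(q^4)$, whereas $\psi(\mathbb{Z}_{|\mathrm{PSL}(2,q)|})$ has order $q^6$, so the ratio decays polynomially in $q$ and stays well below $1/6.68$ throughout the family; an analogous count with element orders $q-1$, $q\pm r+1$ (where $r=\sqrt{2q}$) dispatches $Sz(2^p)$. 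Assembling these estimates with the reduction lemmas contradicts the minimality of $G$.

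The step I expect to be hardest is the extension estimate together with its bookkeeping: the naive bound $\psi(G)\le|N|\exp(N)\psi(G/N)$ is, by a small margin, too lossy to preserve the constant $1/6.68$ — indeed the best possible constant, forced by $A_5$, is roughly $7.66$ — so one must extract the slack carefully in the prime case $N\cong\mathbb{Z}_p$ and in the diagonal case $N\cong S^k$ with $k\ge2$ (where induction via $G/N$ is unavailable and one must instead estimate $\psi$ of a group squeezed between $S^k$ and $\mathrm{Aut}(S)\wr\mathrm{Sym}(k)$ directly), matching these gains against the closed-form differences for $\psi(\mathbb{Z}_n)$ recorded at the start. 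The uniform treatment of the infinite $\mathrm{PSL}(2,p)$ family is a secondary technical point, but a routine one once the class-size estimates are in place.
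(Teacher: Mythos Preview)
The paper does not contain a proof of this theorem: it is quoted as a known result of Herzog, Longobardi and Maj \cite{Her-joa} and used only as background and motivation for the paper's own Theorem~\ref{thm:solvability_criterion}. There is therefore no ``paper's own proof'' to compare your proposal against.

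For what it is worth, your sketch is a plausible outline of how results of this type are typically attacked (reduce via a minimal normal subgroup, invoke extension-closure of solvability, and bottom out at a short list of simple groups), and you correctly identify the genuine difficulty: the naive extension bound $\psi(G)\le |N|\exp(N)\,\psi(G/N)$ does not by itself preserve the constant, so the ``bookkeeping'' you allude to is not mere bookkeeping but the heart of the matter. Whether your proposed refinements actually close the gap cannot be assessed from the sketch as written; in particular, the case where $N\cong S^k$ with $k\ge 2$ and $G$ sits strictly between $S^k$ and $\mathrm{Aut}(S)\wr\mathrm{Sym}(k)$ is not handled by induction on $|G|$ in any obvious way, and your one-line dismissal of it would need to become a genuine argument. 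But none of this can be checked against the present paper, which simply cites the result.
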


In the same paper \cite{Her-joa}, they mentioned that $\psi(A_5)=211$ and $\psi(\mathbb{Z}_{60})=1617.$ Thus $\psi(A_5) = \frac{211}{1617} \psi(\mathbb{Z}_{60})$  and that points towards the fact that Theorem \ref{thm:solvable_bysumofel_herzog18JOA} is not very far from the best possible. Regarding this, they conjectured that if $G$ is a group of order $n$ and
$\psi(G)> \frac{211}{1617} \psi(\mathbb{Z}_n)$, then $G$ is solvable. This conjecture has been settled by 
M. B. Azad and B. Khosravi \cite{asad-joa}.

\begin{theorem}[Azad-Khosravi]
\label{thm:solvable_bysumofel_herzog22JOA}
Let $G$ be a finite group of order $n$. If $$\psi(G) > \frac{211}{1617}\psi(\mathbb{Z}_n),$$ then $G$ is solvable. 
\end{theorem}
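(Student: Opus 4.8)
The plan is to argue by contradiction through a minimal counterexample, the key external input being Thompson's classification of minimal simple groups. Suppose the theorem fails and let $G$ be a non-solvable group of order $n$ with $\psi(G) > \frac{211}{1617}\psi(\mathbb{Z}_n)$ of smallest possible order. Two elementary facts are used repeatedly: $\psi$ is monotone under inclusion of subgroups (it is a sum over a set of elements), so $\psi(H) \le \psi(G)$ for $H \le G$; and $\psi(\mathbb{Z}_m) = \sum_{d \mid m} d\,\varphi(d) \le \psi(\mathbb{Z}_n)$ whenever $m \mid n$, together with the crude bounds $\psi(\mathbb{Z}_n) \ge n\,\varphi(n)$ and $\psi(G) \le |G|\exp(G)$.

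First I would carry out the structural reductions. Let $R$ be the solvable radical of $G$. Using a submultiplicativity-type inequality bounding $\psi(G)$ above in terms of $\psi(R)$, $\psi(G/R)$ and $|R|$, and the minimality of $G$ (which gives $\psi(G/R) \le \frac{211}{1617}\psi(\mathbb{Z}_{|G/R|})$, as $G/R$ is still non-solvable), one deduces $\psi(G) \le \frac{211}{1617}\psi(\mathbb{Z}_n)$, contrary to hypothesis, unless $R = 1$; hence $G$ is semisimple with socle $S^t$, a direct product of copies of a non-abelian simple group $S$. The same ideas — now using $\psi(S \times S) \le \psi(S)^2$ and, more generally, that a nontrivial direct product keeps $\psi$ far below $\psi$ of the cyclic group of the same order — force $t = 1$, and then that $G/S \hookrightarrow \mathrm{Out}(S)$ must be trivial; so $G = S$ is a non-abelian simple group.

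The simple case is where Thompson's theorem enters: a non-abelian simple group either is one of the minimal simple groups $\mathrm{PSL}(2,2^p)$, $\mathrm{PSL}(2,3^p)$ ($p$ an odd prime), $\mathrm{PSL}(2,p)$ ($p > 3$ prime with $p \equiv \pm 2 \pmod 5$), $\mathrm{Sz}(2^p)$ ($p$ an odd prime), $\mathrm{PSL}(3,3)$, or properly contains such a group. For the minimal simple groups the element orders are explicit — in $\mathrm{PSL}(2,q)$ they divide $p$, $(q-1)/\gcd(2,q-1)$, or $(q+1)/\gcd(2,q-1)$, with analogous lists for $\mathrm{Sz}(q)$ — so $\psi$ is evaluated (or cleanly bounded) in closed form and compared to $\psi(\mathbb{Z}_{|T|}) \ge |T|\,\varphi(|T|)$; the outcome is $\psi(T) \le \frac{211}{1617}\psi(\mathbb{Z}_{|T|})$ for every such $T$, with equality exactly for $A_5 = \mathrm{PSL}(2,4) = \mathrm{PSL}(2,5)$. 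For a general simple $S$ the inequality $\psi(S) \le |S|\exp(S)$ against $\psi(\mathbb{Z}_{|S|}) \ge |S|\,\varphi(|S|)$ disposes of all but finitely many orders (the exponent of a simple group is small relative to its order, whereas $\varphi$ of its order is not), and the finitely many small simple groups are checked individually from known character-theoretic data. In every case $\psi(S) \le \frac{211}{1617}\psi(\mathbb{Z}_{|S|})$; in particular for $G = S = A_5$ one gets only $\psi(A_5) = 211 = \frac{211}{1617}\psi(\mathbb{Z}_{60})$, not the strict inequality that was assumed. This contradiction completes the proof.

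The main obstacle I anticipate is the structural reduction to the simple case, and especially the sharpness of the constant. Since $\psi$ is genuinely submultiplicative across a normal subgroup only under coprimality, the inequality relating $\psi(G)$ to $\psi(G/R)$ (and to $\psi$ of a cyclic group) must be pushed quite hard for the induction to close with the precise constant $\frac{211}{1617}$ rather than a weaker one — this is exactly where the improvement over Theorem \ref{thm:solvable_bysumofel_herzog18JOA} lives. The ``almost simple'' layer $S \le G \le \mathrm{Aut}(S)$ is delicate for the same reason: adjoining field or diagonal automorphisms creates new element orders whose contribution to $\psi(G)$ must be controlled tightly. A secondary difficulty is making the estimates for the infinite families $\mathrm{PSL}(2,q)$ and $\mathrm{Sz}(2^p)$ uniform in $q$, which requires sufficiently strong lower bounds on $\psi(\mathbb{Z}_{|G|})$ read off from the prime factorization of $|G|$.
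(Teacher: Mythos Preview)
The paper does not contain a proof of this statement. Theorem~\ref{thm:solvable_bysumofel_herzog22JOA} is quoted as a result of Baniasad Azad and Khosravi \cite{asad-joa} and is used only as background; the paper's own contribution is Theorem~\ref{thm:solvability_criterion}, whose proof relies on different tools (Herstein's theorem on abelian maximal subgroups, the multiplicativity lemmas for $\psi_k$, and an elementary bound comparing $D_k$ to $1/(2^k p^{k-1})$). So there is no ``paper's own proof'' of this theorem to compare your proposal against.

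That said, your outline---minimal counterexample, kill the solvable radical, reduce to a single simple factor, then invoke Thompson's list of minimal simple groups and compute or estimate $\psi$ on each family---is indeed the shape of the argument in \cite{asad-joa}, and your diagnosis of the hard spots is accurate. The reduction step is the real content: $\psi$ is \emph{not} submultiplicative along arbitrary normal subgroups (only when the quotient has a cyclic normal Sylow of coprime order, as in Lemma~\ref{lem:analogue_cia}), so the inequality you need relating $\psi(G)$ to $\psi(G/R)$ does not come for free and requires a more careful argument than what you have sketched. Likewise, ``$\exp(S)$ is small relative to $|S|$'' and ``$\varphi(|S|)$ is not'' are the right heuristics but are not by themselves sharp enough to hit the exact constant $211/1617$; the published proof handles the infinite families $\mathrm{PSL}(2,q)$ and $\mathrm{Sz}(q)$ with explicit formulas for $\psi$ rather than the crude bound $\psi(S)\le |S|\exp(S)$. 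Your proposal is a correct high-level plan, but the two places you yourself flag as obstacles are genuine and would each need substantial additional work to close.
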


Recently E. I. Khukhro, A. Moret\'{o} and M. Zarrin \cite{Khukhro-Joa-2021} posed another conjecture regarding the solvability of a finite group, depending on the average of the order of the elements of a group. This was very recently proved by M. Herzog, P. Longobardi and M. Maj \cite{Her-joa-2022}.  

\begin{theorem}[Herzog-Longobardi-Maj]
\label{theorem:Khukro2021}
Let $G$ be a finite group of order $n$ satisfying $$\psi(G)< \frac{n\psi(A_5)}{60},$$ then $G$ is solvable. 
\end{theorem}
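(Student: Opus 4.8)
The plan is to argue by contradiction with a minimal counterexample, after rewriting the hypothesis in terms of the average element order $o(G):=\psi(G)/|G|$: the assumption $\psi(G)<n\psi(A_5)/60$ says precisely that $o(G)<o(A_5)$, and the theorem is equivalent to the assertion that every non-solvable finite group $G$ satisfies $o(G)\ge o(A_5)$. So suppose $G$ is a non-solvable group of smallest possible order with $o(G)<o(A_5)$.

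First I would record the basic quotient inequality: for $N\trianglelefteq G$ and $g\in G$ the order $o(gN)$ divides $o(g)$, so summing over $g$ gives $\psi(G)\ge |N|\psi(G/N)$, i.e. $o(G)\ge o(G/N)$. Consequently every proper quotient of $G$ again has average order below $o(A_5)$ and, being smaller, must be solvable by minimality. Since two distinct minimal normal subgroups meet trivially and $G$ then embeds into the product of the two corresponding (solvable) quotients, $G$ has a unique minimal normal subgroup $M$; as $G/M$ is solvable and $G$ is not, $M$ is non-solvable, hence $M\cong S^k$ with $S$ a non-abelian simple group, and uniqueness forces $C_G(M)=1$, so $S^k\trianglelefteq G\le \operatorname{Aut}(S)\wr S_k$ with $G/S^k$ solvable. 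If $G=M$ then, because $S^k$ has $k$ distinct minimal normal subgroups that are all normal in $G=S^k$, uniqueness forces $k=1$, i.e. $G=S$ is simple, and then $o(G)=o(S)$. If $M\lneq G$ then either $k=1$ and $G$ is almost simple with socle $S$, or $k\ge 2$ and $G$ permutes the $k$ factors of $M$ transitively with $G/S^k$ a solvable subgroup of $\operatorname{Out}(S)\wr S_k$; in the latter case one also has the product bound $\psi(S^k)\ge |S|^{k-1}\psi(S)$ coming from $o(s_1,\dots,s_k)\ge o(s_1)$, but since $|G|$ may strictly exceed $|M|$ this crude estimate does not by itself yield $o(G)\ge o(A_5)$.

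At this point the problem is reduced to showing $o(G)\ge o(A_5)$ whenever $\operatorname{soc}(G)$ is non-abelian and self-centralizing, and this is the heart of the matter, where the classification of finite simple groups enters. The first ingredient is the purely simple-group estimate $o(S)\ge o(A_5)$ for all non-abelian simple $S$ (with $A_5$ extremal), which handles $G=S$ and, via $o(S^k)\ge o(S)$, the subgroup $M$; the relevant element-order data comes family by family, the genuinely tight family being $PSL(2,q)$ together with the base cases $Sz(q)$ and $PSL(3,3)$ of Thompson's minimal simple groups. The second, harder ingredient is to control the elements of $G$ lying outside $M$: for almost simple $G$ one must run through the finitely many extensions $S\trianglelefteq G\le\operatorname{Aut}(S)$ and check that the outer cosets --- in particular outer involutions and other small-order elements --- do not drag $o(G)$ below $o(A_5)$; for $k\ge2$ one must combine $\psi(G)\ge\psi(S^k)\ge|S|^{k-1}\psi(S)$ with the extra order contributed by elements that move the direct factors and with the solvability constraint on $G/S^k$. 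I expect this last step --- making the inequality $o(G)\ge o(A_5)$ effective and uniform over all almost simple groups and all admissible wreath-type extensions, precisely the regime where the bound is attained only at $A_5$ --- to be the main obstacle; the reductions of the preceding paragraphs are comparatively soft.
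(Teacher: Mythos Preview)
This theorem is not proved in the paper at all: it is quoted in the introduction as a result of Herzog, Longobardi and Maj \cite{Her-joa-2022}, with no argument given. There is therefore no ``paper's own proof'' to compare your attempt against.

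As for your outline on its own merits: the reformulation $o(G)<o(A_5)$, the minimal-counterexample reduction, the inequality $o(G)\ge o(G/N)$, and the reduction to a unique non-abelian minimal normal subgroup $M\cong S^k$ with $C_G(M)=1$ are all standard and correct, and indeed this is essentially the shape of the argument in \cite{Her-joa-2022}. But your proposal is, by your own admission, not a proof: you stop exactly where the real work begins, namely the CFSG-based verification that $o(S)\ge o(A_5)$ for every non-abelian simple $S$ and, more delicately, that passing to almost simple extensions or to wreath-type overgroups with $k\ge 2$ cannot push the average below $o(A_5)$. The inequalities you write down for $k\ge 2$ (e.g.\ $\psi(S^k)\ge |S|^{k-1}\psi(S)$) are too crude to close the gap without further input, as you yourself note. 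So what you have is a correct reduction and an honest identification of the obstacle, but not a proof; the substantive content of the theorem lives precisely in the part you have flagged as ``the main obstacle'' and left open.
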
 

M T\~{a}rn\~{a}uceanu \cite[Theorem 1.1]{tarnauceanu-israel} proved the following:

\begin{theorem}[T\~{a}rn\~{a}uceanu]
\label{theorem:;tarnauceanu}
Let $G$ be a finite group of order $n$. If $$\frac{\psi(G)}{n^2} > \frac{211}{3600},$$ then $G$ is solvable. 
\end{theorem}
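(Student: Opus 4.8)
\noindent The plan is to prove the contrapositive in its sharp form, namely that \emph{every non-solvable finite group $G$ of order $n$ satisfies $\psi(G)\le\frac{211}{3600}n^{2}$, with equality only for $G\cong A_{5}$} (this is best possible, as $\psi(A_{5})=211$ and $|A_{5}|=60$); then Theorem~\ref{theorem:;tarnauceanu} is immediate. Two elementary facts do most of the work. Writing $\mu(H)$ for the largest element order of a finite group $H$: for $N\trianglelefteq G$ and $g\in G$ one has $o(g)\mid o(gN)\,\mu(N)$, hence $o(g)\le o(gN)\,\mu(N)$; summing over a coset of $N$ and then over $G/N$ gives
\[
\psi(G)\le\psi(N)+\mu(N)\,|N|\bigl(\psi(G/N)-1\bigr)\qquad\text{and, crudely,}\qquad\psi(G)\le|N|^{2}\psi(G/N).
\]
Also $\psi(\mathbb{Z}_{m})/m^{2}$ is multiplicative over the prime powers exactly dividing $m$, each factor being $<1$; when $60\mid m$ the factors at $2,3,5$ are at most $\frac{11}{16},\frac{7}{9},\frac{21}{25}$, so $\psi(\mathbb{Z}_{m})\le\frac{1617}{3600}m^{2}$, whence for non-solvable $G$ with $60\mid n$ Theorem~\ref{thm:solvable_bysumofel_herzog22JOA} already gives $\psi(G)\le\frac{211}{1617}\psi(\mathbb{Z}_{n})\le\frac{211}{3600}n^{2}$. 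So a minimal counterexample $G$ (non-solvable, of least order, with $\psi(G)>\frac{211}{3600}|G|^{2}$) must satisfy $60\nmid|G|$.

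\noindent Since $|A_{5}|=60$, the condition $60\nmid|G|$ means that no non-abelian simple group of order dividing $|G|$ is isomorphic to $A_{5}$. Granting the simple-group estimate $\mu(S)\le|S|/24$ for every non-abelian simple group $S\ne A_{5}$ (discussed below), let $R$ be the solvable radical of $G$. If $R\ne1$, then $G/R$ is non-solvable of order $<|G|$, so $\psi(G/R)\le\frac{211}{3600}|G/R|^{2}$ by minimality, and hence $\psi(G)\le|R|^{2}\psi(G/R)\le\frac{211}{3600}|G|^{2}$, a contradiction. Thus $R=1$, so $N:=\mathrm{soc}(G)=S_{1}\times\cdots\times S_{k}$ is a direct product of non-abelian simple groups; each $|S_{i}|$ divides $|G|$, so each $S_{i}\ne A_{5}$, and therefore $\mu(N)\le\mu(S_{1})\cdots\mu(S_{k})\le|N|/24<\frac{211}{3600}|N|$. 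If $N=G$, then $\psi(G)<|G|\,\mu(G)\le|G|^{2}/24<\frac{211}{3600}|G|^{2}$, a contradiction. If $N\ne G$, then $N$ is non-solvable of order $<|G|$, so $\psi(N)\le\frac{211}{3600}|N|^{2}$ by minimality, and the first displayed bound together with $\psi(G/N)\le|G/N|^{2}$ gives
\[
\psi(G)\le\frac{211}{3600}|N|^{2}+\frac{211}{3600}|N|^{2}\bigl(|G/N|^{2}-1\bigr)=\frac{211}{3600}|G|^{2},
\]
again a contradiction. Hence no minimal counterexample exists.

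\noindent The remaining ingredient is the simple-group estimate: $\mu(S)\le|S|/24$ for every non-abelian simple group $S\ne A_{5}$ --- equivalently, the ratio $\mu(S)/|S|$ over non-abelian simple groups is maximized by $A_{5}$ (value $5/60$), the next value being $7/168$ at $\mathrm{PSL}(2,7)$. Since $\mu(S)\le|S|^{1/2}$ for every non-abelian simple $S$, the bound $\mu(S)\le|S|/24$ is automatic once $|S|\ge576$, so only the finitely many smaller non-abelian simple groups ($A_{5},\mathrm{PSL}(2,7),A_{6},\mathrm{PSL}(2,8)$) need direct inspection of their element orders. \textbf{This is the main obstacle}: it is the only step where the classification of finite simple groups is used, and it needs a little care for the small groups (for instance $\mu(\mathrm{PSL}(2,7))=7=168/24$, giving $\psi(\mathrm{PSL}(2,7))<168\cdot7<\frac{211}{3600}\cdot168^{2}$); everything else is the bookkeeping above.
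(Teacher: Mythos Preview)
The paper does not prove Theorem~\ref{theorem:;tarnauceanu}; it is quoted from T\u{a}rn\u{a}uceanu's paper \cite{tarnauceanu-israel} as background, with no argument given here. So there is no ``paper's proof'' to compare against, and I can only assess your argument on its own terms.

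Your overall strategy is sound and the main steps go through. A few remarks:

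\begin{enumerate}
\item The divisibility $o(g)\mid o(gN)\,\mu(N)$ is false in general. Take $G=S_{3}\times\mathbb{Z}_{5}$, $N=S_{3}\times\{0\}$, $g=((12),1)$: then $o(g)=10$, $o(gN)=5$, $\mu(N)=3$, and $10\nmid 15$. What is true (and is all you use) is the inequality $o(g)\le o(gN)\,\mu(N)$, since $g^{o(gN)}\in N$ has order $o(g)/o(gN)\le\mu(N)$. So just drop the divisibility claim.

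\item Your reduction to $60\nmid |G|$ via Theorem~\ref{thm:solvable_bysumofel_herzog22JOA} together with $\psi(\mathbb{Z}_{m})\le\frac{1617}{3600}m^{2}$ for $60\mid m$ is clean and correct; the multiplicativity and monotonicity of $\psi(\mathbb{Z}_{p^{a}})/p^{2a}=\frac{p+p^{-2a}}{p+1}$ in $a$ give exactly the constants $\frac{11}{16},\frac{7}{9},\frac{21}{25}$ you quote. Note that this makes your proof depend on the Azad--Khosravi theorem, which is a substantial input.

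\item The inductive step through the solvable radical and the socle is fine; the inequalities $\psi(G)\le|R|^{2}\psi(G/R)$ and $\psi(G)\le\psi(N)+\mu(N)|N|(\psi(G/N)-1)$ combine with $\mu(N)\le|N|/24<\frac{211}{3600}|N|$ exactly as you wrote.

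\item The assertion $\mu(S)\le|S|^{1/2}$ for every non-abelian simple $S$ is true but is \emph{not} an elementary fact; it already requires CFSG (one has to run through the families). Since you only need $\mu(S)\le|S|/24$ for $S\ne A_{5}$, it would be cleaner to state that bound directly as the CFSG-dependent input and note that the small cases $\mathrm{PSL}(2,7)$, $A_{6}$, $\mathrm{PSL}(2,8)$ give $\mu(S)/|S|=7/168,\,5/360,\,9/504$, all $\le 1/24$, while for the remaining families the ratio is even smaller. Your reduction via $|S|\ge 576$ is fine once $\mu(S)\le|S|^{1/2}$ is granted, but as written that inequality is doing hidden work.
\end{enumerate}

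With the divisibility statement corrected to an inequality and the simple-group bound properly attributed to CFSG, your argument is complete.
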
 

S. M. Jafarian Amiri and M. Amiri \cite{amiri-amiri-cia} considered the following generalization of the function $\psi(G)$ which is defined as follows: 
$$\psi_k(G)= \sum_{g \in G} o(g)^k$$ 
for positive integers $k \geq 1$. They proved that for any positive integers $k$ and $n$ and any group $G$ of order $n$, $\psi_k(G) \leq \psi_k(\mathbb{Z}_n)$. 
Later, this function was also considered by S. Saha \cite{suvra}.
M. Herzog, P. Longobardi, and M. Maj \cite{Her-pureandApplied}  proved that if $G$ is a non-cyclic group of order $n,$ then $\psi(G) \leq \frac{7}{11} \psi(\mathbb{Z}_n).$  This result has been recently generalized by H. K. Dey and A. Mondal \cite{dey-archita-arxiv2022} 
and it was proved that if
$G$ is a non-cyclic group of order $n$ and $k$ is any fixed positive integer, then
$\psi_k(G) \leq \frac{1+3.2^k}{1+2^k+2.4^k} \psi_k(\mathbb{Z}_n).$

In this paper, we are interested in the solvability of a finite group, depending on the value of $\psi_k$. In this context, the main result of this paper is the following.

\begin{theorem}
\label{thm:solvability_criterion}
Let $G$ be a finite group of order $n$ such that the largest prime divisor of $n$ is $p$.
\begin{enumerate}
\item For $p>7$, if there exists any positive integer $k\geq 4$ such that 
$$\psi_k(G) > \displaystyle \frac{1+15.2^k+20.3^k+24.5^k}{(1+2^k+2.4^k)(1+2.3^k)(1+4.5^k)}\psi_k(\mathbb{Z}_n),$$
then the group $G$ is solvable.

\item For $p=7$, if there exists any positive integer $k\geq 13$ such that 
$$\psi_k(G) > \displaystyle \frac{1+15.2^k+20.3^k+24.5^k}{(1+2^k+2.4^k)(1+2.3^k)(1+4.5^k)}\psi_k(\mathbb{Z}_n),$$
then the group $G$ is solvable.
\end{enumerate}
\end{theorem}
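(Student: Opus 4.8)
The plan is to follow the well-trodden route for solvability criteria of this type: reduce to a minimal counterexample, show it must be (close to) a minimal simple group, and then beat the hypothesis against the exact values of $\psi_k$ on those groups. Concretely, suppose $G$ is a non-solvable group of order $n$ of minimal order among all counterexamples. Then $G$ has a composition factor $S$ which is a non-abelian simple group, and by the classification of minimal simple groups (Thompson), any non-solvable group contains a section isomorphic to one of $\mathrm{PSL}_2(2^q)$ ($q$ prime), $\mathrm{PSL}_2(3^q)$ ($q$ odd prime), $\mathrm{PSL}_2(p)$ ($p>3$ prime with $5 \mid p^2-1$), $\mathrm{Sz}(2^q)$ ($q$ odd prime), or $\mathrm{PSL}_3(3)$. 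I would first establish the contrapositive bound: if $G$ is non-solvable, then $\psi_k(G)$ is \emph{small} relative to $\psi_k(\mathbb{Z}_n)$, small enough to violate the stated inequality. The numerology $\frac{1+15\cdot 2^k+20\cdot 3^k+24\cdot 5^k}{(1+2^k+2\cdot 4^k)(1+2\cdot 3^k)(1+4\cdot 5^k)}$ is engineered so that the denominator looks like $\psi_k$ of a product of cyclic-type pieces corresponding to primes $2,3,5$ — strongly suggesting $A_5$ (order $60 = 2^2\cdot 3\cdot 5$) is the extremal obstruction, with $\psi_k(A_5) = 1 + 15\cdot 2^k + 20\cdot 3^k + 24\cdot 5^k$ and $\psi_k(\mathbb{Z}_{60}) = (1+2^k+2\cdot 4^k)(1+2\cdot 3^k)(1+4\cdot 5^k)$.

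The key technical steps, in order, would be: (i) Recall/derive the submultiplicativity-type estimates for $\psi_k$ under extensions — in particular that if $N \trianglelefteq G$ then $\psi_k(G) \leq \psi_k(N)\,\psi_k(G/N)$ up to a controlled factor, or more usefully a bound of the form $\psi_k(G) \leq \psi_k(H)\cdot [G:H]^{\,?}$, together with $\psi_k(G) \leq \psi_k(\mathbb{Z}_n)$ (Jafarian Amiri–Amiri) and the non-cyclic refinement $\psi_k(G)\leq \frac{1+3\cdot 2^k}{1+2^k+2\cdot 4^k}\psi_k(\mathbb{Z}_n)$ from Dey–Mondal, which will handle the "reduction to almost-simple" part. (ii) For $G$ with a non-abelian simple section $S$, split into the case $S \cong A_5$ and $S \not\cong A_5$. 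When $S\cong A_5$: if $G = A_5$ exactly, the inequality is an equality, not a strict one, so that case is excluded by strictness; if $G$ properly contains an $A_5$-section, then $n$ is a proper multiple of $60$ and I would show the ratio $\psi_k(G)/\psi_k(\mathbb{Z}_n)$ drops strictly below the $A_5$-value because multiplying the order by any extra prime power $q^a$ multiplies $\psi_k(\mathbb{Z}_n)$ by roughly $q^{ak}$ while $\psi_k(G)$ grows by a strictly smaller factor (element orders in a group with a fixed non-abelian section cannot all be as large as in the cyclic group). (iii) When $S$ is one of the other minimal simple groups, compute $\psi_k(S)$ from its element-order statistics (these are classical: for $\mathrm{PSL}_2(q)$ the element orders are $1$, $p$, divisors of $(q-1)/d$ and divisors of $(q+1)/d$; similarly for $\mathrm{Sz}(2^q)$ and $\mathrm{PSL}_3(3)$) and show $\psi_k(S)/\psi_k(\mathbb{Z}_{|S|})$ is already below the target ratio — this is where the hypotheses "$p>7$, $k\geq 4$" and "$p=7$, $k\geq 13$" enter: for each candidate $S$ one gets an inequality in $k$, and the stated thresholds are exactly the worst cases (presumably $\mathrm{PSL}_2(7)$, of order $168 = 2^3\cdot 3\cdot 7$, forces $k\geq 13$, while the others are satisfied already for $k\geq 4$).

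The main obstacle I anticipate is step (ii)–(iii) done \emph{uniformly over all $n$}, i.e. controlling what happens when the simple section $S$ sits inside a much larger group $G$ with many extra prime divisors: one must show that "padding" $G$ by arbitrary solvable pieces can never push $\psi_k(G)/\psi_k(\mathbb{Z}_n)$ back up above the $A_5$-threshold. The clean way to do this is a monotonicity/normalization lemma: show that among all groups of order $n$ with a given non-abelian composition factor, the ratio $\psi_k(G)/\psi_k(\mathbb{Z}_n)$ is maximized by taking $G = S \times \mathbb{Z}_{n/|S|}$ with $\gcd(|S|, n/|S|)$ as large as forced, and then reduce to checking finitely many "base" simple groups against the cyclic denominator — this factorization reduces the global problem to the local computations of (iii). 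Handling the prime $2$ and $3$ multiplicities carefully (since $A_5$ already absorbs $2^2\cdot 3\cdot 5$) is the fiddly part; the inequality $k\geq 4$ (resp. $k\geq 13$) is presumably what makes all these finitely many verifications go through with room to spare, since raising $k$ makes the largest element order dominate and thus makes cyclic groups win by an ever-wider margin, pushing every non-solvable ratio strictly below the fixed rational function on the right-hand side.
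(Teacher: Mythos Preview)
Your approach via Thompson's classification of minimal simple groups is entirely different from the paper's proof, and considerably heavier. The paper never touches the classification. Instead, writing $D_k$ for the ratio in the hypothesis, it proves the purely arithmetic claim that $D_k > \frac{1}{2^k p^{k-1}}$ under the stated conditions on $p$ and $k$. Since $\psi_k(\mathbb{Z}_n) \geq \phi(n)\,n^k \geq n^{k+1}/p$, the hypothesis then forces $\psi_k(G) > n^{k+1}/(2p)^k$, so some element $x$ has $o(x) > n/(2p)$, i.e.\ $[G:\langle x\rangle] < 2p$. From here the paper inducts on the number of prime divisors of $n$: either $p \mid [G:\langle x\rangle]$, so $[G:\langle x\rangle]=p$ and $\langle x\rangle$ is a cyclic maximal subgroup (Herstein's theorem gives solvability); or $\langle x\rangle$ contains a cyclic Sylow $p$-subgroup $P$. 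If $P \trianglelefteq G$ one passes to $G/P$ via Lemma~\ref{lem:analogue_cia} and applies induction; if not, Sylow theory forces $[G:N_G(P)]=p+1$, whence $\langle x\rangle = N_G(P)$ is again a cyclic maximal subgroup and Herstein applies. The thresholds $k\geq 4$ (for $p>7$) and $k\geq 13$ (for $p=7$) arise solely from the single arithmetic inequality $D_k > 1/(2^k p^{k-1})$, not from any case split over simple groups.

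Your route might in principle be pushed through, but the step you yourself flag as the main obstacle is a genuine gap. The submultiplicativity $\psi_k(G) \leq \psi_k(N)\,\psi_k(G/N)$ fails in general (already $G=\mathbb{Z}_4$, $N=\mathbb{Z}_2$, $k=1$ gives $11>9$); the paper's Lemma~\ref{lem:analogue_cia} only supplies it when $N$ is a \emph{cyclic normal Sylow} subgroup. Your proposed monotonicity lemma---that $\psi_k(G)/\psi_k(\mathbb{Z}_n)$ is maximized over non-solvable $G$ of order $n$ by $S \times \mathbb{Z}_{n/|S|}$---is precisely the hard global statement, and you give no mechanism for proving it; without it, the local computations on $\mathrm{PSL}_2(q)$, $\mathrm{Sz}(q)$, $\mathrm{PSL}_3(3)$ do not assemble into a proof for arbitrary $G$. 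The paper's large-element/Herstein argument sidesteps all of this entirely.
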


 We also note that $\psi_k(A_5)= 1+15.2^k+20.3^k+24.5^k$ and $\psi_k(\mathbb{Z}_{60})= \psi_k(\mathbb{Z}_4) \times \psi_k(\mathbb{Z}_3) \times \psi_k(\mathbb{Z}_5)= (1+2^k+2.4^k)(1+2.3^k)(1+4.5^k)$ and $A_5$ is not solvable. Therefore if one wants to prove a similar result including $p=5$, this lower bound is the best possible.

Before going to the proof of Theorem \ref{thm:solvability_criterion}, we give examples of groups whose solvability does not follow from any of Theorem \ref{thm:solvable_bysumofel_herzog18JOA}, Theorem \ref{thm:solvable_bysumofel_herzog22JOA}, Theorem  \ref{theorem:Khukro2021}, and Theorem \ref{theorem:;tarnauceanu} but it follows from Theorem \ref{thm:solvability_criterion}. This also hints towards the fact that looking at $\psi_k(G)$ for different $k$, instead of looking only at $\psi(G)$ may be useful in getting further information about the structure of the group. 

There are $18$ groups up to isomorphism of order $156$. For a group $H$ of order $156$, by using Sage one can check the following:
\begin{enumerate}
\item If $\psi(H) > \frac{1}{6.68} \psi(\mathbb{Z}_{156}),$ then by Theorem \ref{thm:solvable_bysumofel_herzog18JOA}, $H$ is solvable. Thus, if $\psi(H)<1809$, Theorem \ref{thm:solvable_bysumofel_herzog18JOA} cannot be used to conclude the solvability of $H$. 
\item In a similar way, if $\psi(H)<1577$ then Theorem
\ref{thm:solvable_bysumofel_herzog22JOA} cannot be used. 
\item We have $\psi(A_5)=211.$  Thus, by Theorem \ref{theorem:Khukro2021}, if $\psi(H)\leq548$, then $H$ is solvable. Thus, if $\psi(H)>549,$ Theorem \ref{theorem:Khukro2021}
cannot be used. 
\item If $\psi(H)<1426,$ then 
Theorem \ref{theorem:;tarnauceanu} cannot be used to conclude the solvability of $H$.
\end{enumerate}

Thus for the groups $H$ with $549<\psi(H)<1426,$ one cannot deduce the solvability of $H$ by any of Theorem \ref{thm:solvable_bysumofel_herzog18JOA}, Theorem \ref{thm:solvable_bysumofel_herzog22JOA}, Theorem  \ref{theorem:Khukro2021}, and Theorem \ref{theorem:;tarnauceanu}. The group $H_1=\mathbb{Z}_2 \times (\mathbb{Z}_{13} : \mathbb{Z}_6)$ (Gap group id (156, 8)) and
$H_2= \mathbb{Z}_2 \times \mathbb{Z}_2 :  (\mathbb{Z}_{13} : \mathbb{Z}_3)$ (Gap group id (156, 14)) are two such groups. Using Sage, we have checked that both of these groups satisfy the condition of Theorem \ref{thm:solvability_criterion} for $k=4$ and thus, both of them are solvable. 


Throughout this article, $\phi$ is the Euler's function. For a group $G$ and a subgroup $P$ of $G$, by $N_G(P)$, we denote the Normalizer of the subgroup $P$ in the group $G$. Most of our notation is standard and we refer the reader to the books \cite{Isac-ams, Scott}. 

\section{Proof of Theorem \ref{thm:solvability_criterion}}

In this section, we at first recall some earlier known results
which are crucial for the proof of our main result. M. Herzog, P. Longobardi and M. Maj \cite[Theorem 10]{Her-jpaa} proved the following.

\begin{theorem}
\label{thm:solv-eulerphi}
Let $G$ be a finite group of order $n$ satisfying $$\psi(G) \geq \frac{3n}{5} \phi(n).$$
Then $G$ is solvable. 
\end{theorem}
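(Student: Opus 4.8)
My plan is to obtain Theorem \ref{thm:solv-eulerphi} as a quick consequence of Theorem \ref{thm:solvable_bysumofel_herzog22JOA}. The one extra ingredient I need is a uniform comparison of $\psi(\mathbb{Z}_n)$ with $n\phi(n)$. Since $\psi$ is multiplicative on coprime direct factors, $\psi(\mathbb{Z}_n)=\prod_{p^a\|n}\psi(\mathbb{Z}_{p^a})$ while $n\phi(n)=\prod_{p^a\|n}p^a\phi(p^a)$, so it is enough to control each local factor, and an elementary computation shows $\psi(\mathbb{Z}_{p^a})\le\bigl(1+\tfrac1{p(p-1)}\bigr)p^a\phi(p^a)$ for every prime power $p^a$, with equality exactly when $a=1$. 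Multiplying over the prime powers exactly dividing $n$ gives $\dfrac{\psi(\mathbb{Z}_n)}{n\phi(n)}\le\prod_{p\mid n}\bigl(1+\tfrac1{p(p-1)}\bigr)\le\prod_{p}\bigl(1+\tfrac1{p(p-1)}\bigr)<2$, the last infinite product being convergent; hence $\psi(\mathbb{Z}_n)<2\,n\phi(n)$ for every $n$.

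With this in hand the proof is immediate: assuming $\psi(G)\ge\tfrac{3n}{5}\phi(n)$, I get $\psi(G)\ge\tfrac{3n}{5}\phi(n)>\tfrac{3}{10}\psi(\mathbb{Z}_n)>\tfrac{211}{1617}\psi(\mathbb{Z}_n)$ because $\tfrac{3}{10}>\tfrac{211}{1617}$, and Theorem \ref{thm:solvable_bysumofel_herzog22JOA} then forces $G$ to be solvable. On this route the only step with any content is the multiplicative estimate $\psi(\mathbb{Z}_n)<2\,n\phi(n)$, so there is essentially no obstacle.

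I should add that this deduction is anachronistic, since Theorem \ref{thm:solv-eulerphi} predates and is logically independent of Theorem \ref{thm:solvable_bysumofel_herzog22JOA}; if a self-contained argument is wanted, I would run a minimal-counterexample reduction instead. Take a non-solvable $G$ of least order with $\psi(G)\ge\tfrac{3n}{5}\phi(n)$. Using submultiplicativity of $\psi$ along normal sections — in particular $\psi(N\rtimes H)\le\psi(N)\psi(H)$ when $\gcd(|N|,|H|)=1$, and a crude bound like $\psi(G)\le|N|\exp(N)\,\psi(G/N)$ in general — I would first strip away all nontrivial abelian normal subgroups, and then cut down to the almost simple case $S\trianglelefteq G\le\mathrm{Aut}(S)$ with $S$ nonabelian simple. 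Then I would invoke the classification of finite simple groups: since $\phi(m)\asymp m/\log\log m$, the target $\tfrac{3}{5}|G|\phi(|G|)$ is of size $|G|^{2-o(1)}$, while $\psi(S)\le|S|\cdot\max_{g\in S}o(g)$ is subquadratic in $|S|$ uniformly over the families $\mathrm{PSL}(2,q)$, $\mathrm{Sz}(q)$, the Ree, classical, exceptional and alternating families, and the sporadic groups, and $|\mathrm{Out}(S)|$ is far too small to affect this; so $\psi(G)<\tfrac{3}{5}|G|\phi(|G|)$ with room to spare, the tightest case being $G=A_5$, where $\psi(A_5)=211<576=\tfrac{3\cdot 60}{5}\phi(60)$. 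In this self-contained route the hard part is making the section-reduction inequalities sharp enough to beat $\tfrac{3n}{5}\phi(n)$ exactly when $N$ is a minimal normal subgroup of small prime order coprime to $G/N$: there the crude bound $\psi(G)\le|N|\exp(N)\psi(G/N)$ is not good enough, and one has to exploit the exact value $\psi(\mathbb{Z}_p)=1+p(p-1)$.
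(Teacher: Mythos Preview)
The paper does not prove Theorem~\ref{thm:solv-eulerphi} at all: it is quoted verbatim from \cite[Theorem~10]{Her-jpaa} and used only as a black box in the short derivation of Theorem~\ref{thm:solvability-reltn-phin}. So there is no ``paper's own proof'' to compare against.

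Your first deduction is nonetheless correct. The local estimate
\[
\frac{\psi(\mathbb{Z}_{p^a})}{p^a\phi(p^a)}=\frac{p^{2a+1}+1}{(p^2-1)p^{2a-1}}\le 1+\frac{1}{p(p-1)}
\]
is valid with equality precisely when $a=1$, and multiplying over all primes one gets
\[
\frac{\psi(\mathbb{Z}_n)}{n\phi(n)}\le \prod_{p\mid n}\Bigl(1+\frac{1}{p(p-1)}\Bigr)<\prod_{p}\Bigl(1+\frac{1}{p(p-1)}\Bigr)=\frac{\zeta(2)\zeta(3)}{\zeta(6)}\approx 1.9436<2,
\]
the Euler-product identity coming from $1+\tfrac{1}{p(p-1)}=\dfrac{1-p^{-6}}{(1-p^{-2})(1-p^{-3})}$. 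Hence $\psi(G)\ge\tfrac{3n}{5}\phi(n)>\tfrac{3}{10}\psi(\mathbb{Z}_n)>\tfrac{211}{1617}\psi(\mathbb{Z}_n)$ and Theorem~\ref{thm:solvable_bysumofel_herzog22JOA} applies. As you rightly flag, this is anachronistic: chronologically \cite{Her-jpaa} comes \emph{after} \cite{asad-joa}, but the Azad--Khosravi bound itself was proved independently of Theorem~\ref{thm:solv-eulerphi}, so there is no circularity. Your sketched self-contained route via a minimal counterexample and a reduction to the almost-simple case is indeed closer in spirit to what is actually done in \cite{Her-jpaa}; the delicate point you identify (that the crude bound $\psi(G)\le |N|\exp(N)\,\psi(G/N)$ must be replaced by the sharper multiplicative inequality when $N$ is a cyclic normal Sylow subgroup) is exactly the kind of step handled there.
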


S. M. Jafarian Amiri and H. Amiri \cite[Lemma 2.3, Lemma 2.5]{amiri-amiri-cia} proved the following two lemmas.

\begin{lemma}
\label{lem:analogue_cia} 
Let $P$ be a Sylow $p$-subgroup of $G$, $P \trianglelefteq G$ and $P$ is cyclic in $G$. Then, 
$$\psi_k(G) \leq \psi_k(P) \psi_k(G/P).$$ 
Moreover, equality happens if and only if $P \subseteq Z(G).$
\end{lemma}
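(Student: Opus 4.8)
The plan is to work one coset of $P$ at a time. Write $p^a=|P|$ and identify $P$ with $\mathbb{Z}_{p^a}$ additively; since $P\trianglelefteq G$, conjugation by an element $x\in G$ restricts to an automorphism of $P$, i.e.\ to multiplication by a unit $r_x\in(\mathbb{Z}/p^a)^{\times}$. Partition $G$ into the cosets $Px$, $\bar x\in G/P$, and set $m=m_x=o(xP)$. For every $g\in Px$ the image $\bar g=xP$ has order $m$, so $m\mid o(g)$ and $o(g)=m\cdot o(g^m)$ with $g^m\in P$; hence
$$\sum_{g\in Px}o(g)^k=m^k\sum_{g\in Px}o(g^m)^k.$$
Since the cosets correspond bijectively to the elements of $G/P$ and $\sum_{\bar x}m_x^k=\psi_k(G/P)$, the whole lemma reduces to the coset inequality $\sum_{g\in Px}o(g^m)^k\le\psi_k(P)$, to be proved with equality precisely when $x$ centralizes $P$.

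Writing $g=xu$ with $u$ ranging over $P$ and using $xu=(r_xu)x$, a short induction gives $g^m=(xu)^m=s_xu+x^m$ in $P$, where $s_x=1+r_x+\cdots+r_x^{m-1}$; here one uses that $x^m\in P$ is central in the abelian group $P$, so $r_x^m\equiv 1\pmod{p^a}$. Thus $u\mapsto g^m$ is an affine self-map of $P=\mathbb{Z}_{p^a}$ whose linear part is multiplication by $s_x$. If $p\nmid s_x$ this map is a bijection of $P$, so $\{g^m:g\in Px\}=P$ and the coset inequality holds with equality. If $p\mid s_x$, I will first show that in fact $s_x\equiv 0\pmod{p^a}$: in this case $r_x\ne 1$ (otherwise $s_x=m$, which is prime to $p$ because $m\mid[G:P]$ and $P$ is a Sylow $p$-subgroup), so $r_x$ has order $d>1$ dividing $m$, hence prime to $p$; since a nontrivial element of $(\mathbb{Z}/p^a)^{\times}$ cannot be both $\equiv 1\pmod p$ and of order prime to $p$, the element $r_x-1$ is a unit, whence $1+r_x+\cdots+r_x^{d-1}=(r_x^d-1)/(r_x-1)=0$ and therefore $s_x=(m/d)(1+\cdots+r_x^{d-1})=0$. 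So in this case $g^m=x^m$ for all $g\in Px$, and the left side of the coset inequality equals $p^a\,o(x^m)^k$. (For $p=2$ this case never occurs, as $(\mathbb{Z}/2^a)^{\times}$ is a $2$-group and so $r_x=1$ always.)

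It remains to bound $p^a\,o(x^m)^k$ by $\psi_k(P)$ in the case $p\mid s_x$. The key point is that $\langle x^m\rangle$ is a subgroup of the cyclic group $\langle x\rangle$, so $x$ centralizes $\langle x^m\rangle$; since $r_x\ne 1$ means $x$ does not centralize all of $P$, we conclude $\langle x^m\rangle\subsetneq P$, i.e.\ $o(x^m)\le p^{a-1}$. Using $k\ge 1$ one gets $p^a\,o(x^m)^k\le p^{a+(a-1)k}\le p^{a-1+ak}$, while $\psi_k(P)\ge\phi(p^a)\,p^{ak}=p^{a-1}(p-1)p^{ak}>p^{a-1+ak}$ (counting the elements of order $p^a$, plus the identity); this gives the coset inequality, and strictly, whenever $p\mid s_x$.

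Summing the coset inequalities yields $\psi_k(G)\le\psi_k(P)\psi_k(G/P)$. For the equality statement, note that by the analysis above the coset $Px$ gives equality iff $p\nmid s_x$ iff $r_x=1$ iff $x$ centralizes $P$; hence equality holds overall iff $C_G(P)=G$, i.e.\ iff $P\subseteq Z(G)$, and conversely if $P\subseteq Z(G)$ then every coset already gives equality. I expect the main obstacle to be the case $p\mid s_x$: one must recognize that the map $g\mapsto g^m$ collapses the entire coset $Px$ onto the single element $x^m$, and that this element cannot generate $P$; once that is seen, the remaining estimates are routine comparisons of powers of $p$.
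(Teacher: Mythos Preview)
The paper does not supply its own proof of this lemma; it is quoted from Jafarian Amiri--Amiri \cite{amiri-amiri-cia} (their Lemmas~2.3 and~2.5), with a further pointer to \cite{dey-archita-arxiv2022}. Your argument is correct and complete, and it is essentially the argument that appears in those sources (which in turn extends the $k=1$ proof of Amiri--Jafarian Amiri--Isaacs \cite{amiri-communication}): reduce to one coset $Px$ at a time via $o(g)=m\,o(g^m)$, express $g^m$ as an affine map of $P$ with linear part $s_x=1+r_x+\cdots+r_x^{m-1}$, and split into the bijective case ($p\nmid s_x$, giving equality on that coset) and the degenerate case ($s_x\equiv 0$, where the whole coset collapses to $x^m$, which cannot generate $P$ since $x$ does not centralise $P$). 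The equality characterisation then falls out immediately.

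Two cosmetic remarks, neither affecting validity. First, with your convention $g=xu$ one literally obtains $g^m=r_xs_xu+x^m$ rather than $s_xu+x^m$; since $r_x$ is a unit this changes nothing, and writing $g=ux$ yields exactly the formula you state. Second, your inequality $\psi_k(P)\ge (p-1)p^{\,a-1+ak}>p^{\,a-1+ak}$ is not strict when $p=2$, so the parenthetical observation that the degenerate case cannot occur for $p=2$ (because $(\mathbb{Z}/2^a)^\times$ is a $2$-group and $r_x$ has odd order) is genuinely needed there, and you include it.
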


\begin{lemma}
\label{lem:coprime}
For any fixed positive integer $k$ and any two finite groups $A$ and $B$, we have 
$\psi_k(A \times B) \leq \psi_k(A) \times \psi_k(B).$ Moreover, equality holds if and only if 
 $\gcd(|A|, |B|)=1$. 
\end{lemma}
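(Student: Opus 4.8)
The plan is to reduce the statement to the elementary fact that for $(a,b)\in A\times B$ one has $o\big((a,b)\big)=\mathrm{lcm}\big(o(a),o(b)\big)$. First I would rewrite both sides of the claimed inequality as sums over the common index set $A\times B$. On the right, the product factors as
$$\psi_k(A)\,\psi_k(B)=\Big(\sum_{a\in A}o(a)^k\Big)\Big(\sum_{b\in B}o(b)^k\Big)=\sum_{(a,b)\in A\times B}o(a)^k\,o(b)^k,$$
while on the left, using the order formula for the direct product,
$$\psi_k(A\times B)=\sum_{(a,b)\in A\times B}\mathrm{lcm}\big(o(a),o(b)\big)^k.$$
Thus the whole inequality follows termwise, once I establish that $\mathrm{lcm}\big(o(a),o(b)\big)^k\le o(a)^k\,o(b)^k$ for each pair.

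For the termwise bound I would invoke the identity $\mathrm{lcm}(m,n)\,\gcd(m,n)=mn$ for positive integers $m,n$. Since $\gcd(m,n)\ge 1$, this gives $\mathrm{lcm}(m,n)\le mn$, with equality precisely when $\gcd(m,n)=1$. Applying this with $m=o(a)$ and $n=o(b)$, and then raising to the $k$-th power (legitimate because both sides are positive and $x\mapsto x^k$ is increasing on the positive reals), yields the required inequality for each summand. Summing over all $(a,b)\in A\times B$ proves $\psi_k(A\times B)\le\psi_k(A)\,\psi_k(B)$.

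For the equality statement I would note that, since every summand obeys the termwise inequality, equality in the sum holds if and only if equality holds in every term, that is, if and only if $\gcd\big(o(a),o(b)\big)=1$ for all $a\in A$ and $b\in B$. If $\gcd(|A|,|B|)=1$, then $o(a)\mid |A|$ and $o(b)\mid |B|$ force $\gcd\big(o(a),o(b)\big)\mid\gcd(|A|,|B|)=1$, so equality holds throughout. Conversely, if $\gcd(|A|,|B|)>1$, I would pick a prime $p$ dividing both $|A|$ and $|B|$; by Cauchy's theorem there exist $a\in A$ and $b\in B$ each of order $p$, and for this pair $\gcd\big(o(a),o(b)\big)=p>1$, so that single term is strictly smaller than $o(a)^k\,o(b)^k$ and the overall inequality is strict.

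The argument is short and entirely elementary, so there is no serious obstacle; the only point requiring a little care is the converse direction of the equality characterization, where Cauchy's theorem is needed to manufacture a pair of elements of common prime order that witnesses the strict inequality.
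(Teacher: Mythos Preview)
Your argument is correct. The termwise comparison via $o\big((a,b)\big)=\mathrm{lcm}\big(o(a),o(b)\big)\le o(a)\,o(b)$, together with Cauchy's theorem for the strictness direction, is exactly the right approach and all steps are valid.

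Note, however, that the paper does not supply its own proof of this lemma: it quotes the result from Jafarian Amiri and Amiri \cite{amiri-amiri-cia} (their Lemmas~2.3 and~2.5) without reproducing the argument. So there is no in-paper proof to compare against; your write-up simply fills in what the paper leaves to the reference, and it does so by what is the standard elementary route.
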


The proof of Lemma \ref{lem:analogue_cia} can also be found in \cite{dey-archita-arxiv2022}.  
We also mention the following result of Herstein \cite{herstein}.

\begin{theorem}
\label{thm:herstein} 
Let $G$ be a finite group with an abelian maximal subgroup $C$. Then $G$ is solvable.  
\end{theorem}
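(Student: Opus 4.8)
\noindent The statement above is Herstein's theorem, and the plan is to prove it by induction on $|G|$. If $G$ is abelian there is nothing to do, so assume $G$ is non-abelian (in particular the maximal subgroup $C$ is nontrivial) and that the assertion holds for every group of smaller order. The first easy case is $C\trianglelefteq G$: then $C$ is a maximal \emph{normal} subgroup, so by the correspondence theorem $G/C$ has no nontrivial proper subgroup and hence $G/C$ is cyclic of prime order; since $C$ is abelian, $G$ is metabelian and we are done.

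So from now on $C$ is not normal. Put $N=\operatorname{core}_G(C)$, the largest normal subgroup of $G$ contained in $C$. As $N\le C$, $N$ is abelian, and in $G/N$ the image $C/N$ is an abelian maximal subgroup with $|G/N|<|G|$; by the inductive hypothesis $G/N$ is solvable, and together with the solvability of $N$ this makes $G$ solvable. Hence it remains only to handle the case $\operatorname{core}_G(C)=1$ (with $C$ nontrivial and not normal), and here the plan is to recognise $G$ as a Frobenius group with complement $C$.

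Two observations do this. First, $C\le N_G(C)\le G$ and $C$ is maximal, so $N_G(C)$ equals $C$ or $G$; it cannot be $G$, for that would give $C\trianglelefteq G$ and hence $C\le\operatorname{core}_G(C)=1$; thus $N_G(C)=C$. Second, suppose $g\notin C$ and $D:=C\cap gCg^{-1}\ne 1$. Since $C$ and $gCg^{-1}$ are abelian, both normalise $D$, so $N_G(D)$ contains $\langle C,gCg^{-1}\rangle$; as $gCg^{-1}\ne C$ and both are maximal, $\langle C,gCg^{-1}\rangle=G$, so $D\trianglelefteq G$ and $D\le\operatorname{core}_G(C)=1$, a contradiction. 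Thus distinct conjugates of $C$ intersect trivially, and with $N_G(C)=C$ this is exactly the statement that $G$ is a Frobenius group with complement $C$. By Frobenius's theorem $G=K\rtimes C$ with $K$ a normal subgroup (the Frobenius kernel), and by Thompson's theorem $K$ is nilpotent; since $C$ is abelian, $G$ is nilpotent-by-abelian, hence solvable, which closes the induction.

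The genuinely hard point is the very last step, the passage from ``$G$ is a Frobenius group with abelian complement'' to ``$G$ is solvable'': I would lean on Thompson's theorem that a Frobenius kernel is nilpotent. If one wishes to avoid it, note that an abelian Frobenius complement has cyclic Sylow subgroups (Burnside), so $C$ is cyclic; when $|C|$ is even it contains an involution acting fixed-point-freely on $K$, forcing $K$ to be abelian, and the odd-order case can be dealt with separately. Everything preceding that step is routine manipulation of cores, normalizers and the correspondence theorem, the two load-bearing moves being the reduction to a core-free maximal subgroup and the ``normalizer of the intersection'' trick that produces the trivial-intersection property.
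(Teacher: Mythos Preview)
Your argument is correct. The reduction to the core-free case via induction, the verification that $N_G(C)=C$, and the ``normalizer of the intersection'' trick showing that distinct conjugates of $C$ meet trivially are all sound, and together they exhibit $G$ as a Frobenius group with abelian complement; invoking Frobenius's and Thompson's theorems then finishes the job.

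There is nothing to compare with here: the paper does not prove this statement at all but simply quotes it as Herstein's theorem, citing \cite{herstein}. So you have supplied a proof where the paper gives only a reference. One remark worth making is that your appeal to Thompson's theorem (Frobenius kernels are nilpotent) is anachronistic---Herstein's note is from 1958 and Thompson's result only appeared in 1960---so Herstein's own argument necessarily avoided it. Your closing paragraph already gestures at how to do so (cyclic Sylow subgroups in the complement, the involution trick for even $|C|$), and indeed one can push through the core-free case with more elementary tools; but as a self-contained modern proof what you wrote is perfectly fine.
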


We now prove our main result.

\begin{proof}[Proof of Theorem \ref{thm:solvability_criterion}]
We first prove the first part of the theorem. For that, 
we prove the following claim:\\
{\bf Claim:} For $p>7$ and $k \geq 4$, we have 
\begin{equation}
\label{eqn:main} \frac{1+15.2^k+20.3^k+24.5^k}{(1+2^k+2.4^k)(1+2.3^k)(1+4.5^k)}> \frac{1}{2^kp^{k-1}}.
\end{equation}

\noindent
{\bf Proof of Claim:} For $k\geq 4,$ we have $p^{k-1}\geq p^{k-4}p^3\geq 6^{k-4}6^4 =6^k.$ Thus, after cross-multiplying, it is sufficient to show the following:
\begin{eqnarray*}
12^k+15.24^k+20.36^k+24.60^k & \geq &  1+4.5^k+2.3^k+8.15^k+2^k+4.10^k+2.6^k+8.30^k \\ & & +2.4^k+8.20^k+4.12^k+16.60^k.
\end{eqnarray*}  
This directly follows by rearranging the terms of the right hand side suitably. This proves the claim.

We now proceed towards the proof of the theorem. Let us denote $\frac{1+15.2^k+20.3^k+24.5^k}{(1+2^k+2.4^k)(1+2.3^k)(1+4.5^k)}$ by $D_k$.
As there are at least $\phi(n)$  many elements of order $n$ in $\mathbb{Z}_n$ and it is easy to see that $\phi(n) \geq \frac{n}{p}$, by our assumptions and the above claim we have
$$\psi_k(G) > D_k \psi_k(\mathbb{Z}_n) > \frac{n^{k+1}}{2^kp^k}.$$
Therefore, there is at least one element $x$ such that $o(x)^k > \frac{n^k}{(2p)^k}$ and hence the order of $x$ is $>\frac{n}{2p}.$
Thus, $[G: \langle x \rangle ]< 2p$. We now proceed by induction on the number of prime divisors of $n$, say $l$.

If $l=1$, it is a $p$-group and hence solvable. Thus, let us assume that $l>1$ and the statement is true for $l-1$. 

There can be two cases.

\medskip

{\bf Case 1: $p$ divides $[G: \langle x \rangle ]$:} In this case, $[G: \langle x \rangle ]=p$ and thus $\langle x \rangle$ is a cyclic maximal subgroup of $G$ and hence by Theorem \ref{thm:herstein}, $G$ is solvable. 

\medskip

{\bf Case 2: $p$ does not divide $[G: \langle x \rangle ]$:} Thus $x$ contains a cyclic Sylow $p$-subgroup, say $P$ of $G$. 

If $P$ is normal, then using Lemma \ref{lem:analogue_cia}, we have
$$\psi_k(P)\psi_k(G/P) \geq \psi_k(G) \geq D_k \psi_k (\mathbb{Z}_n) = D_k\psi_k(\mathbb{Z}_{|P|}) \psi_k(\mathbb{Z}_{|G/P|}).
$$ The last equality follows from Lemma \ref{lem:coprime}. Since
$\psi_k(P)= \psi_k(\mathbb{Z}_{|P|})$, we have $$\psi_k(G/P) \geq D_k\psi_k(\mathbb{Z}_{|G/P|}).$$
As the number of prime divisors of $G/P$ must be $l-1$, by induction $G/P$ is solvable and thus $G$ is solvable.

If $P$ is not normal in $G$, as $\langle x \rangle$ is a subgroup of $N_G(P)$, it follows that $[G:N_G(P)] <2p$. Using Sylow Theorem, we have that $[G:N_G(P)] =p+1.$ As the index of $N_G(P)$ in $G$ is $p+1$ and the index of $\langle x \rangle $in $G$ is $2p$, and  $\langle x \rangle$ is a subgroup of $N_G(P)$, we must have that $\langle x \rangle = N_G(P).$ Thus, it is again a cyclic maximal subgroup of $G$ and hence $G$ is solvable. 

This completes the proof of the first part of the theorem.

For the second part, we observe that $7^{k-1}>6^k$ for $k \geq 13$ and using this, one can prove that for $p=7$ and $k\geq 13$, we have $$\frac{1+15.2^k+20.3^k+24.5^k}{(1+2^k+2.4^k)(1+2.3^k)(1+4.5^k)}\geq \frac{1}{2^k7^{k-1}}$$ 
in an identical way to the proof of \eqref{eqn:main}. The rest of the proof for the second part follows exactly as the first part.
This completes the proof of the theorem. 
\end{proof}

\begin{remark}
By Burnside's Theorem for solvable groups, we know that any group of order $p^aq^b$ is solvable. Thus, Theorem \ref{thm:solvability_criterion}, together with Burnside's Theorem, gives a sufficient condition of solvability for any group not of the form $2^a3^b5^c.$
\end{remark}

We end this short note with a straightforward generalization of Theorem \ref{thm:solv-eulerphi}.
For that purpose, we at first prove a relation between $\psi_k(G)$ and $\psi(G)$.

\begin{lemma}
\label{lem::reltn_psik-psiG}
Let $G$ be a non-cyclic group of order $n$ such that the smallest prime divisor of $n$ is $q$. Then, $\psi_k(G) \leq \big(\frac{n}{q}\big)^{k-1}\psi(G)$.
\end{lemma}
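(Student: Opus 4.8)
The plan is to bound each summand $o(g)^k$ individually and then sum. The key observation is this: since $G$ is non-cyclic of order $n$ with smallest prime divisor $q$, no element of $G$ can have order $n$; in fact, for every $g \in G$ the order $o(g)$ is a proper divisor of $n$, and the largest proper divisor of $n$ is $n/q$. Hence $o(g) \le n/q$ for all $g \in G$.

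From this, I would write $o(g)^k = o(g)^{k-1} \cdot o(g) \le (n/q)^{k-1} \cdot o(g)$ for each $g \in G$, using that $k \ge 1$ so that $k - 1 \ge 0$ and the exponentiation is monotone. Summing over all $g \in G$ gives
\begin{equation*}
\psi_k(G) = \sum_{g \in G} o(g)^k \le \left(\frac{n}{q}\right)^{k-1} \sum_{g \in G} o(g) = \left(\frac{n}{q}\right)^{k-1} \psi(G),
\end{equation*}
which is exactly the claim. The case $k = 1$ is trivially an equality, consistent with the statement.

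The only point requiring any care — and the nearest thing to an obstacle — is justifying that $o(g) \le n/q$ for every $g$, i.e.\ that a non-cyclic group has no element of order $n$. This is immediate: if some $g$ had $o(g) = n = |G|$, then $\langle g \rangle = G$ would be cyclic, contradicting the hypothesis. So $o(g) \mid n$ with $o(g) \ne n$, forcing $o(g) \le n/q$ since $q$ is the smallest prime dividing $n$ and hence $n/q$ is the largest proper divisor of $n$. No deeper structure theory is needed.
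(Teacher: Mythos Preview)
Your proof is correct and follows essentially the same approach as the paper: bound each $o(g)^k$ by $(n/q)^{k-1}o(g)$ using the fact that every element order in a non-cyclic group of order $n$ is at most $n/q$, then sum. The paper's argument is just a slightly terser version of what you wrote.
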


\begin{proof}
We have
\begin{equation*}
\psi_k(G)=\sum _{a \in G} o(a)^k \leq  \sum _{a \in G} o(a).\bigg(\frac{n}{q}\bigg)^{k-1}=\bigg(\frac{n}{q}\bigg)^{k-1}\psi(G).
\end{equation*}
The second inequality follows since $q$ is the smallest prime divisor of $n$ and thus the maximum possible order of any element of the group can be $n/q.$ This completes the proof. 
\end{proof}

\begin{theorem}
\label{thm:solvability-reltn-phin} Let $G$ be a finite group of order $n$ such that the smallest prime divisor of $n$ is $q$ . If there exists a positive integer $k$ such that $$\psi_k(G)\geq \bigg(\frac{3n^k}{5q^{k-1}}\bigg) \phi(n),$$ then $G$ is solvable. 
\end{theorem}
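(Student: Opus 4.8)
The plan is to deduce Theorem~\ref{thm:solvability-reltn-phin} from Theorem~\ref{thm:solv-eulerphi} by passing from a hypothesis on $\psi_k(G)$ to the hypothesis $\psi(G)\geq \frac{3n}{5}\phi(n)$ required there. The bridge is Lemma~\ref{lem::reltn_psik-psiG}, which controls $\psi_k(G)$ from above by $\big(\frac{n}{q}\big)^{k-1}\psi(G)$ whenever $G$ is non-cyclic. So the first step is to dispose of the cyclic case: if $G$ is cyclic it is abelian, hence solvable, and there is nothing to prove. Thus for the remainder we may assume $G$ is non-cyclic, and Lemma~\ref{lem::reltn_psik-psiG} applies.

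Next I would simply chain the inequalities. By hypothesis and then by Lemma~\ref{lem::reltn_psik-psiG},
\begin{equation*}
\bigg(\frac{3n^k}{5q^{k-1}}\bigg)\phi(n) \;\leq\; \psi_k(G) \;\leq\; \bigg(\frac{n}{q}\bigg)^{k-1}\psi(G) \;=\; \frac{n^{k-1}}{q^{k-1}}\,\psi(G).
\end{equation*}
Dividing both sides by the positive quantity $\frac{n^{k-1}}{q^{k-1}}$ gives $\frac{3n}{5}\phi(n) \leq \psi(G)$, which is exactly the hypothesis of Theorem~\ref{thm:solv-eulerphi}. Applying that theorem yields that $G$ is solvable, completing the argument.

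There is really no substantive obstacle here: the only point that needs care is the non-cyclic hypothesis in Lemma~\ref{lem::reltn_psik-psiG}, which is why the cyclic case must be separated out first, and the observation that the factor $\frac{n^{k-1}}{q^{k-1}}$ is strictly positive so the division preserves the inequality. One might also remark that when $k=1$ the statement reduces verbatim to Theorem~\ref{thm:solv-eulerphi}, so the content is the case $k\geq 2$, where the larger left-hand side $\big(\frac{3n^k}{5q^{k-1}}\big)\phi(n)$ is a genuinely stronger-looking (but, via the lemma, equivalent-in-consequence) demand. The proof is a two-line computation once Lemma~\ref{lem::reltn_psik-psiG} and Theorem~\ref{thm:solv-eulerphi} are in hand.
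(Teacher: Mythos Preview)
Your argument is correct and is precisely the approach the paper takes: the paper's proof is the one-liner ``Follows from Lemma~\ref{lem::reltn_psik-psiG} and Theorem~\ref{thm:solv-eulerphi},'' and you have simply spelled out the chain of inequalities and, commendably, made explicit the disposal of the cyclic case (needed because Lemma~\ref{lem::reltn_psik-psiG} is stated only for non-cyclic $G$), which the paper leaves implicit.
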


\begin{proof} 
Follows from Lemma \ref{lem::reltn_psik-psiG} and Theorem \ref{thm:solv-eulerphi}. 
\end{proof}

 \section*{Acknowledgements}
 
 \noindent 
The author is thankful to Prof.
 Arvind Ayyer for his constant support and encouragement. The author also acknowledges Prof. Angsuman Das and Archita Mondal for helpful discussions. 
 The author acknowledges SERB-National Post Doctoral Fellowship (File No. PDF/2021/001899) during the preparation of this work and profusely thanks Science and Engineering Research Board, Govt. of India for the funding. The author also acknowledges ideal working conditions in the Department of Mathematics, Indian Institute of Science.
\medskip

\end{document}